\numberwithin{equation}{section}
\newtheorem{thm}{Theorem}[section]
\newtheorem{cor}[thm]{Corollary}
\newtheorem{lem}[thm]{Lemma}
\theoremstyle{definition}
\theoremstyle{remark}
\newcommand{\To}{\longrightarrow}
\newcommand{\g}{\mathfrak{g}}
\newcommand{\Z}{\mathbb{Z}}
\newcommand{\End}{\operatorname{End}}
\newcommand{\gr}[1]{\textit{Gr-}{#1}}
\author{Can Hat\.{i}po\u{g}lu}
\email{hatipoglucan@gmail.com}
\subjclass[2010]{Primary }
\keywords{Simple modules, injective modules, graded algebras, cocycle twists, Lie color algebras}
\date{\today}
\begin{document}
\title{Injective Hulls of Simple Modules Over Nilpotent Lie Color Algebras}
\maketitle
\begin{abstract}
Using cocycle twists for associative graded algebras, we characterize finite dimensional nilpotent Lie color algebras $L$ graded by arbitrary abelian groups whose enveloping algebras $U(L)$ have the property that the injective hulls of simple right $U(L)$-modules are locally Artinian. We also collect together results on gradings on Lie algebras arising from this characterization.
\end{abstract}
\section{Introduction}
When Jategaonkar proved in \cite{jategaonkar} that fully bounded Noetherian rings satisfy Jacobson's conjecture, a key step in his proof was to observe that over such rings finitely generated essential extensions of simple modules were Artinian. This created an interest in this finiteness property, and a number of Noetherian rings have been studied in relation with it. The obvious question whether this property was shared by every Noetherian ring was answered negatively by Musson in \cite{musson80, musson82}.

Recall that a module $M$ is said to be \emph{locally Artinian} if every finitely generated submodule of it is Artinian. For a Noetherian ring $A$, finitely generated essential extensions of simple right modules are Artinian if and only if the property
\begin{center}
$(\diamond)$ \quad Injective hulls of simple right $A$-modules are locally Artinian 
\end{center}
is satisfied. We should stress that property $(\diamond)$ is sufficient for a Noetherian ring to satisfy Jacobson's conjecture.

Interest in property $(\diamond)$ have increased in recent years, with Noetherian down-up algebras having this property have been characterized in a series of papers by Carvalho \textit{et al.} \cite{carvalho-lomp-pusat}, Carvalho and Musson \cite{carvalho-musson} and Musson \cite{musson12}. Later, finite dimensional solvable Lie superalgebras over algebraically closed fields whose enveloping algebras have property $(\diamond)$ have been characterized in \cite{hatipoglu-lomp} and then a characterization of Ore extensions $k[x][y;\alpha,d]$ of $k[x]$ having property $(\diamond)$ has been obtained in \cite{carvalho-hatipoglu-lomp}. Most recently, a characterization involving torsion theories and some sufficient conditions have been given by the author in \cite{hatipoglu}.

Lie color algebras are natural generalizations of Lie superalgebras. In this note we show using a technique which is sometimes called ``Scheunert's trick'', that the result on nilpotent Lie superalgebras could easily be generalized to nilpotent Lie color algebras. The characterization of nilpotent Lie superalgebras $L$ with property $(\diamond)$ has shown that it only depends on the even part of the Lie superalgebra. We will see that the same holds in this more general setting of Lie color algebras.

Some word on notation. In the rest of the paper, we assume that $k$ is an algebraically closed field of characteristic zero, $G$ is an additive abelian group and all the graded rings and modules are graded by $G$. The group of nonzero elements of $k$ will be denoted by $k^{\ast}$. 
\section{Cocycle Twists of Associative Graded Algebras and Property $(\diamond)$}
Let $A$ be a graded associative algebra over $k$. That is, $A$ is a graded $k$-vector space and there is a family $\{A_{g} \mid g \in G\}$ of subspaces of $A$ such that $A = \oplus_{g\in G}A_{g}$ and $A_{g}A_{h} \subseteq A_{g+h}$ for every $g,h \in G$. The set of all $g \in G$ such that $A_{g} \neq 0$ is called the \textit{support} of the grading. A nonzero element $a$ of $A_{g}$ is said to be a \emph{homogeneous element of degree} $g$, and in this case we will write $|a| = g$ to indicate the degree of $a$. 

A graded right $A$-module is a right $A$-module $M$ such that there exists a family $\{M_{g} \mid g \in G\}$ of $k$-subspaces of $M$ such that $M = \oplus_{g \in G}M_{g}$ and for every $g,h \in G$ we have $M_{g}A_{h} \subseteq M_{g+h}$. For a graded ring $A$, we will denote the category of graded right $A$-modules by $\gr{A}$ and Mod-$A$ will denote the category of right $A$-modules.

By a \emph{2-cocycle} on $G$ we mean a map $\sigma: G \times G \To k^{\ast}$ which satisfies
\begin{equation}\label{2-cocycle}
\sigma(a,b)\sigma(a+b,c) = \sigma(b,c)\sigma(a,b+c)
\end{equation}
for all $a,b,$ and $c \in G$. For our purposes, we will also assume that a 2-cocycle $\sigma$ satisfies the normalization condition $\sigma(0,0)=1$. It follows from this assumption that $\sigma(a,0) = \sigma(0,a) = \sigma(0,0) = 1$.  Note that if $\sigma$ is a 2-cocycle, then the map $\sigma^{-1}:G\times G\To k^{*}$ defined by 
\[\sigma^{-1}(a,b) = \frac{1}{\sigma(a,b)}\]
is also a 2-cocycle.

Let $\sigma$ be a 2-cocycle on $G$. The \emph{cocycle twist} of the algebra $A$ is the algebra $A^{\sigma}$ whose underlying $G$-graded vector space is the same as $A$, and whose multiplication is defined as 
\begin{equation}\label{cocycle-defining-equation}
a\cdot_{\sigma} b = \sigma(|a|,|b|)ab
\end{equation}
for every homogeneous elements $a,b \in A$. Observe that the defining relation \eqref{2-cocycle} of a 2-cocycle is exactly what is needed for the twisted algebra $A^{\sigma}$ to be an associative algebra, because, for homogeneous elements $a, b, c \in A$ we have
\[a\cdot_{\sigma}(b\cdot_{\sigma}c) = a\cdot_{\sigma}(\sigma(|b|,|c|)bc) = \sigma(|a|,|b|+|c|)\sigma(|b|,|c|)abc = \sigma(|a|,|b|)\sigma(|a|+|b|,|c|)abc = (a\cdot_{\sigma}b)\cdot_{\sigma}c\]

There is a corresponding relation between the categories of graded modules over $A$ and its cocycle twist $A^{\sigma}$. Let $M$ be a graded right $A$-module. We define a right $A^{\sigma}$-module structure on $M$ as follows. For homogeneous elements $a \in A^{\sigma}$ and $m \in M$, we define the action of $a$ on $m$ by
\[m*_{\sigma}a = \sigma(|a|,|m|)ma.\]
where the action on the right hand side is the right $A$-action on $M$. We will shortly see that this defines an equivalence of categories between the categories of graded right modules $\gr{A}$ and $\gr{A}^{\sigma}$ but first we need to introduce more terminology. 

For a graded right module $M$ over $A$ and $g \in G$, we define the $g$-\emph{suspension} of $M$ to be the graded right $A$-module $M(g) = \oplus_{h\in G} M(g)_{h}$ where $M(g)_{h} = M_{g+h}$. This defines a functor $T_{g}: \gr{A}\To \gr{A}$  by letting $T_{g}(M) = M(g)$. For $G$-graded rings $A$ and $B$, Gordon and Green \cite{gordon-green} call a functor $F:\gr{A} \To \gr{B}$ a \emph{graded functor} if it commutes (in the appropriate categories) with the $g$-suspension functor for every $g\in G$. A graded functor $U: \gr{A}\To \gr{B}$ is called a \emph{graded equivalence} if there is a graded functor $V:\gr{B}\To \gr{A}$ such that $VU\simeq 1_{\gr{A}}$ and $UV \simeq 1_{\gr{B}}$. If this is the case, the categories $\gr{A}$ and $\gr{B}$ are said to be \emph{graded equivalent}. We are ready to state the following lemma.
\begin{lem}
Let $A$ be a graded ring and $\sigma$ be a 2-cocycle on $G$. Then $\gr{A}$ is graded equivalent to $\gr{A}^{\sigma}$.
\end{lem}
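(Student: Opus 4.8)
The plan is to exhibit the graded equivalence by an explicit pair of functors which are the identity on underlying $G$-graded vector spaces and on morphisms, and merely reinterpret the module action. Let $F_\sigma\colon\gr{A}\To\gr{A}^{\sigma}$ send a graded right $A$-module $M$ to the same graded vector space, equipped with the $A^{\sigma}$-action $m*_\sigma a = \sigma(|a|,|m|)\,ma$ on homogeneous elements (extended bilinearly), and send a morphism to itself. Because $\sigma^{-1}$ is again a normalized 2-cocycle and, as \eqref{cocycle-defining-equation} shows at once, $(A^{\sigma})^{\sigma^{-1}} = A$ as $G$-graded algebras, the same recipe applied to $\sigma^{-1}$ defines a functor $F_{\sigma^{-1}}\colon\gr{A}^{\sigma}\To\gr{A}$, which will be the (strict) inverse of $F_\sigma$.

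First I would check that $F_\sigma$ is well defined on objects. Rescaling a homogeneous element by a nonzero scalar does not change its degree, so $m*_\sigma a\in M_{|m|+|a|}$ whenever $m$ and $a$ are homogeneous, and the grading condition $M_g(A^{\sigma})_h\subseteq M_{g+h}$ holds; the right $A^{\sigma}$-module axiom $(m*_\sigma a)*_\sigma b = m*_\sigma(a\cdot_\sigma b)$ follows from the cocycle relation \eqref{2-cocycle} by exactly the computation used in the text to verify that $A^{\sigma}$ is associative. On morphisms there is nothing to prove: a morphism $f\colon M\To N$ in $\gr{A}$ preserves degrees, so $f(m*_\sigma a) = \sigma(|a|,|m|)f(ma) = \sigma(|a|,|f(m)|)f(m)a = f(m)*_\sigma a$, i.e.\ $f$ is $A^{\sigma}$-linear; preservation of identities and of composition is clear.

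Next I would note that $F_{\sigma^{-1}}$ is a strict two-sided inverse of $F_\sigma$, not merely a quasi-inverse. For a graded $A$-module $M$, the module $F_{\sigma^{-1}}(F_\sigma(M))$ has underlying graded space $M$ and action, on homogeneous elements, equal to $\sigma^{-1}(|a|,|m|)\,(m*_\sigma a) = \sigma^{-1}(|a|,|m|)\sigma(|a|,|m|)\,ma = ma$; hence $F_{\sigma^{-1}}\circ F_\sigma = 1_{\gr{A}}$, and symmetrically $F_\sigma\circ F_{\sigma^{-1}} = 1_{\gr{A}^{\sigma}}$. Thus $F_\sigma$ is in fact an isomorphism of categories.

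It remains to verify that $F_\sigma$ and $F_{\sigma^{-1}}$ are graded functors, i.e.\ commute up to natural isomorphism with the suspension functors $T_g$; I expect this to be the only step with real content. The point to watch is that an element lying in $M_{g+h}$ has degree $g+h$ as an element of $M$ but degree $h$ as an element of $M(g)$, so the actions on $F_\sigma(M(g))$ and on $T_g(F_\sigma(M)) = F_\sigma(M)(g)$ differ, on the degree-$h$ component, by the scalar $\sigma(|a|,g+h)\,\sigma(|a|,h)^{-1}$. To correct this, define $\varphi_M\colon F_\sigma(M(g))\To F_\sigma(M)(g)$ to be multiplication by $\sigma(h,g)$ on the degree-$h$ homogeneous component; then the cocycle relation \eqref{2-cocycle} in the form $\sigma(x,h)\sigma(x+h,g) = \sigma(h,g)\sigma(x,h+g)$ is exactly what makes each $\varphi_M$ an isomorphism in $\gr{A}^{\sigma}$, and naturality in $M$ is immediate because $\varphi_M$ acts by a scalar on each graded piece while morphisms preserve degrees. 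The family $\{\varphi_M\}_M$ is then a natural isomorphism $F_\sigma\circ T_g\simeq T_g\circ F_\sigma$, and the analogous family built from $\sigma^{-1}$ does the same for $F_{\sigma^{-1}}$. Combined with the previous paragraph, this shows $F_\sigma$ and $F_{\sigma^{-1}}$ are mutually inverse graded functors, so $\gr{A}$ and $\gr{A}^{\sigma}$ are graded equivalent.
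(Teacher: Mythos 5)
Your proposal follows the same strategy as the paper's proof: the twist functor $(-)^{\sigma}$, acting as the identity on underlying graded spaces and on morphisms, with $(-)^{\sigma^{-1}}$ as a strict two-sided inverse. What you add, and what is genuinely worth adding, is the verification of the graded-functor condition. The paper dismisses this with ``since the functor does not change the underlying $G$-grading, it clearly commutes with $T_{g}$,'' but as you point out the two composites $(M(g))^{\sigma}$ and $(M^{\sigma})(g)$ carry \emph{different} $A^{\sigma}$-actions (the twisting scalar on the degree-$h$ piece is $\sigma(|a|,h)$ in one and $\sigma(|a|,g+h)$ in the other), so strict commutation fails and one needs exactly the degreewise-scalar natural isomorphism you build; your check that its module-linearity is the cocycle identity \eqref{2-cocycle}, and that naturality is automatic because it is a scalar on each graded piece, is correct. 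On this point your argument is more complete than the paper's.

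One caveat, which you inherit from the paper's displayed formula for the twisted action: with $m\ast_{\sigma}a=\sigma(|a|,|m|)\,ma$ the right $A^{\sigma}$-module axiom is \emph{not} ``exactly the computation used in the text.'' The axiom requires $\sigma(|b|,|m|+|a|)\,\sigma(|a|,|m|)=\sigma(|a|,|b|)\,\sigma(|a|+|b|,|m|)$, and applying \eqref{2-cocycle} to the triple $(|b|,|a|,|m|)$ reduces this to $\sigma(|b|,|a|)=\sigma(|a|,|b|)$, which fails for non-symmetric cocycles --- for instance $\sigma\bigl((a_{1},a_{2}),(b_{1},b_{2})\bigr)=q^{a_{1}b_{2}}$ on $\Z^{2}$ with $q\neq 1$ --- and non-symmetric $\sigma$ is precisely the case needed later for Scheunert's trick, since a symmetric cocycle does not change the commutation factor. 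The correct convention is $m\ast_{\sigma}a=\sigma(|m|,|a|)\,ma$, with the degree of the left-hand factor in the first slot, mirroring the algebra product; then \eqref{2-cocycle} applied to $(|m|,|a|,|b|)$ gives the module axiom verbatim, and the rest of your argument --- the strict inverse via $\sigma^{-1}$ and the suspension isomorphism, with its correcting scalar adjusted accordingly --- goes through unchanged.
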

\begin{proof}
Let $(-)^{\sigma}:\gr{A} \To \gr{A}^{\sigma}$ denote the functor which assigns to each graded right $A$-module $M$ the twisted module $M^{\sigma}$, leaving homomorphisms unchanged. Since the functor does not change the underlying $G$-grading on $M$, it is clear that it commutes with the suspension functor $T_{g}$ for every $g \in G$. Moreover, if we let $\sigma^{-1}: G\times G\To k^{*} $ be the 2-cocycle defined by $\sigma^{-1}(a,b)=\frac{1}{\sigma(a,b)}$, then we have $(M^{\sigma})^{\sigma^{-1}} = M$ and hence $(- )^{\sigma^{-1}}\circ (-)^{\sigma} \simeq 1_{\gr{A}}$ and similarly $(- )^{\sigma}\circ (- )^{\sigma^{-1}} \simeq 1_{\gr{A}^{\sigma}}$. So it follows that the categories $\gr{A}$ and $\gr{A}^{\sigma}$ are graded equivalent. 
\end{proof}
We would like to connect $(\diamond)$ property of $A$ to that of $A^{\sigma}$. We already know that there is an equivalence of categories between $\gr{A}$ and $\gr{A^{\sigma}}$, so that the $(\diamond)$ property for graded injective hulls passes from $\gr{A}$ to $\gr{A^{\sigma}}$. The question is whether the same is true for the categories Mod-$A$ and Mod-$A^{\sigma}$. It turns out that this is indeed the case, as the following result shows that graded equivalences between graded module categories give rise to Morita equivalences between module categories. 
\begin{thm}[\cite{menini-nastasescu}, Theorem 1.3. See also \cite{gordon-green} Theorem 5.4.]
Let $A$ and $B$ be $G$-graded rings. Let $\Phi_{A}$ (resp. $\Phi_{B}$) denote the forgetful functor $\gr{A}\To\textrm{Mod-}A$ (resp. $\gr{B}\To\textrm{Mod-}B)$. Then the following statements are equivalent.
\begin{itemize}
\item[(i)] The categories $\gr{A}$ and $\gr{B}$ are graded equivalent;
\item[(ii)] There is a Morita equivalence $L:\textrm{Mod-}A \To \textrm{Mod-}B$ and a graded functor $F:\gr{A}\To \gr{B}$ such that $\Phi_{B}\circ F = L\circ\Phi_{A}$;
\item[(iii)] There exists an object $P\in \gr{A}$ such that $\Phi_{A}(P)$ is a finitely generated projective generator in Mod-$A$ and the graded ring $\End_{A}(P)$ is isomorphic to $B$ as graded rings.
\end{itemize}
\end{thm}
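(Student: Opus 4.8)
The plan is to prove the implications (iii)$\Rightarrow$(i), (iii)$\Rightarrow$(ii), (i)$\Rightarrow$(iii) and (ii)$\Rightarrow$(i), which together give the equivalence of the three statements. The whole argument is a graded reprise of classical Morita theory, and the single observation that links the graded and ungraded pictures is this: if $P\in\gr{A}$ is \emph{finitely generated}, then the graded hom functor $F_P\colon\gr{A}\To\gr{\End_A(P)}$, defined on objects by $F_P(M)=\bigoplus_{g\in G}\Hom_{\gr{A}}\bigl(P,M(g)\bigr)$, satisfies $\Phi\circ F_P=\Hom_A\bigl(\Phi_A(P),-\bigr)\circ\Phi_A$, where $\Phi\colon\gr{\End_A(P)}\To\textrm{Mod-}\End_A(P)$ is the forgetful functor. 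One checks directly that $F_P$ and the functor $-\otimes_{\End_A(P)}P\colon\gr{\End_A(P)}\To\gr{A}$ commute with every suspension $T_g$, hence are graded functors, and that they form a (graded tensor--hom) adjoint pair whose unit and counit are graded natural transformations.

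For (iii)$\Rightarrow$(i) and (iii)$\Rightarrow$(ii), assume $P\in\gr{A}$ with $Q:=\Phi_A(P)$ a finitely generated projective generator of $\textrm{Mod-}A$, and fix a graded ring isomorphism $\End_A(P)\cong B$. I would take $F:=F_P\colon\gr{A}\To\gr{B}$ and its adjoint $F':=-\otimes_B P\colon\gr{B}\To\gr{A}$ from the previous paragraph. Applying the forgetful functors to the unit and counit of this graded adjunction yields, by the displayed identity, precisely the unit and counit of the classical Morita adjunction attached to the progenerator $Q$; these are isomorphisms, and since the forgetful functors are faithful and exact they reflect isomorphisms, so the graded unit and counit are already isomorphisms. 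Hence $F$ is a graded equivalence, which is (i); and, setting $L:=\Hom_A(Q,-)\colon\textrm{Mod-}A\To\textrm{Mod-}B$ (the classical Morita equivalence), the same displayed identity says $\Phi_B\circ F=L\circ\Phi_A$, which is (ii).

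For (i)$\Rightarrow$(iii), let $F\colon\gr{A}\To\gr{B}$ be a graded equivalence with graded quasi-inverse $F'$, and set $P:=F'(B_B)$, where $B_B$ is $B$ regarded as a graded right module over itself. Being the image of a finitely generated projective object, $P$ is finitely generated projective in $\gr{A}$, hence a graded direct summand of some $A(g_1)\oplus\cdots\oplus A(g_n)$; forgetting the grading makes $\Phi_A(P)$ a direct summand of $A^n$, so it is finitely generated projective in $\textrm{Mod-}A$. Since $F$ commutes with every $T_g$, we get $\Hom_{\gr{A}}\bigl(P,P(g)\bigr)\cong\Hom_{\gr{B}}\bigl(B_B,B_B(g)\bigr)\cong B_g$ compatibly with composition, so $\End_A(P)\cong B$ as graded rings. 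The one point needing care is that $\Phi_A(P)$ is a \emph{generator} of $\textrm{Mod-}A$: here one uses that, although $B_B$ by itself need not generate $\gr{B}$, the family $\{B_B(g):g\in G\}$ does; hence $\{P(g)=F'\bigl(B_B(g)\bigr):g\in G\}$ generates $\gr{A}$, so the finitely generated module $A_A$ is a graded quotient of some $P(g_1)\oplus\cdots\oplus P(g_n)$, and forgetting the grading (under which $\Phi_A\bigl(P(g)\bigr)=\Phi_A(P)$) exhibits $A_A$ as a quotient of $\Phi_A(P)^n$. This ``suspensions are invisible to the forgetful functor'' step is exactly what repairs the failure of $B_B$ to generate $\gr{B}$.

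The implication (ii)$\Rightarrow$(i) is the one I expect to be the main obstacle. Given a Morita equivalence $L$ and a graded functor $F$ with $\Phi_B\circ F=L\circ\Phi_A$, faithfulness of $F$ is immediate since $\Phi_B\circ F=L\circ\Phi_A$ is a composite of faithful functors, but \emph{fullness} is not: for $\beta\colon F(M)\To F(N)$ in $\gr{B}$, fullness of $L$ produces $\gamma\colon\Phi_A(M)\To\Phi_A(N)$ with $L(\gamma)=\Phi_B(\beta)$, and one must show that $\gamma$ is degree-preserving. The plan is to write $\gamma=\sum_g\gamma_g$ for its homogeneous components, note that each $\gamma_g$ is a morphism $M\To N(g)$ in $\gr{A}$, apply $F$ to see that $L(\gamma_g)=\Phi_B\bigl(F(\gamma_g)\bigr)$ is homogeneous of degree $g$ for the gradings that $F(M)$ and $F(N)$ carry, and then compare with the fact that $\Phi_B(\beta)$ has degree $0$; uniqueness of the homogeneous decomposition together with faithfulness of $L$ forces $\gamma_g=0$ for $g\neq0$, so $\gamma=\Phi_A(\alpha)$ with $F(\alpha)=\beta$. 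Once $F$ is full and faithful, put $N_0:=F(A_A)\in\gr{B}$; then $\Phi_B(N_0)=L(A_A)$ is a finitely generated projective generator of $\textrm{Mod-}B$ (being the image under $L$ of such a module), and, using fullness and faithfulness of $F$ together with $F\bigl(A_A(g)\bigr)=F(A_A)(g)$, one gets $\End_B(N_0)\cong\bigoplus_{g}\Hom_{\gr{A}}\bigl(A_A,A_A(g)\bigr)=\bigoplus_g A_g=A$ as graded rings. Thus $N_0$ witnesses condition (iii) for the pair $(B,A)$, and the already-established implication (iii)$\Rightarrow$(i), applied with $A$ and $B$ interchanged, gives that $\gr{B}$ is graded equivalent to $\gr{A}$, i.e.\ (i) holds.
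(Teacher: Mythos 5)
This theorem is not proved in the paper at all: it is quoted verbatim from the literature (Menini--N\u{a}st\u{a}sescu, Theorem 1.3, and Gordon--Green, Theorem 5.4), so there is no in-paper argument to compare yours against. Judged on its own, your reconstruction follows the standard graded Morita-theory route and its overall architecture is sound: the graded tensor--hom adjunction for a finitely generated graded $P$ with $\Phi\circ F_{P}=\Hom_{A}(\Phi_{A}(P),-)\circ\Phi_{A}$, the reflection of isomorphisms by the faithful exact forgetful functors for (iii)$\Rightarrow$(i) and (iii)$\Rightarrow$(ii), the choice $P=F'(B_{B})$ together with the observation that the whole family $\{B_{B}(g)\}_{g\in G}$ (not $B_{B}$ alone) generates $\gr{B}$ for (i)$\Rightarrow$(iii), and the reduction of (ii)$\Rightarrow$(i) to the already-proved implication applied to the pair $(B,A)$ via $N_{0}=F(A_{A})$.

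The one place where you overclaim is the fullness of $F$ in (ii)$\Rightarrow$(i). Your argument starts by writing the $A$-linear map $\gamma\colon\Phi_{A}(M)\To\Phi_{A}(N)$ as a finite sum $\sum_{g}\gamma_{g}$ of homogeneous components, but such a decomposition exists only when $\Hom_{A}(\Phi_{A}(M),\Phi_{A}(N))$ coincides with the graded Hom $\bigoplus_{g}\Hom_{\gr{A}}(M,N(g))$, which holds when $M$ is finitely generated and fails in general (a homomorphism out of an infinitely generated graded module need not have finitely many nonzero homogeneous components). So, as written, you have not proved that $F$ is full on all of $\gr{A}$. Fortunately your subsequent use of fullness is only for morphisms $F(A_{A})\To F(A_{A})(g)=F(A_{A}(g))$, i.e.\ with source the cyclic module $A_{A}$, where the decomposition argument is valid; restricting the fullness claim to finitely generated sources (or just to $A_{A}$ and its suspensions) repairs the statement without changing anything downstream, since that is exactly what is needed to identify the graded ring $\bigoplus_{g}\Hom_{\gr{B}}(N_{0},N_{0}(g))$ with $A$ and invoke (iii)$\Rightarrow$(i) with the roles of $A$ and $B$ interchanged.
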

Since $(\diamond)$ is a Morita invariant property, the next result follows from the above theorem.
\begin{cor}\label{the-corollary}
Let $A$ be a $G$-graded associative $k$-algebra and let $\sigma$ be a 2-cocycle on $G$. Then $A$ has property $(\diamond)$ if and only if $A^{\sigma}$ does.
\end{cor}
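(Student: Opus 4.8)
The plan is to reduce the statement entirely to Morita invariance, so that the real work has already been done by the Lemma above and by the quoted theorem of Menini and Năstăsescu. First I would invoke the Lemma: for the $2$-cocycle $\sigma$ the categories $\gr{A}$ and $\gr{A^{\sigma}}$ are graded equivalent, the equivalence being the twisting functor $(-)^{\sigma}$ with quasi-inverse $(-)^{\sigma^{-1}}$. Feeding this into the theorem of Menini and Năstăsescu, the implication (i) $\Rightarrow$ (ii) upgrades this graded equivalence to a genuine Morita equivalence $L:\textrm{Mod-}A\To\textrm{Mod-}A^{\sigma}$ (in fact (i) $\Rightarrow$ (iii) even produces a concrete object $P\in\gr{A}$ whose underlying module is a finitely generated projective generator with $\End_{A}(P)\cong A^{\sigma}$ as graded rings, so one may take $L=\Hom_{A}(P,-)$). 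After these two citations, what remains is precisely the assertion that property $(\diamond)$ is a Morita invariant.

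Second, I would establish — as a short auxiliary lemma, if a convenient reference is not at hand — that every equivalence $L:\textrm{Mod-}A\To\textrm{Mod-}B$ of module categories preserves property $(\diamond)$. The key observation is that each ingredient of $(\diamond)$ is categorical. Such an $L$ is exact and preserves simple objects, monomorphisms, essential extensions and injective objects; hence it sends an injective hull $S\hookrightarrow E(S)$ of a simple right $A$-module to an injective hull $L(S)\hookrightarrow L(E(S))$ of a simple right $B$-module, and up to isomorphism every simple right $B$-module is of the form $L(S)$. Moreover $L$ induces isomorphisms of subobject lattices, so it preserves the class of finitely generated modules and the class of Artinian modules, and therefore the class of locally Artinian modules. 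Combining these, $E(S)$ is locally Artinian for every simple $S\in\textrm{Mod-}A$ if and only if $E(T)$ is locally Artinian for every simple $T\in\textrm{Mod-}B$.

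Applying this auxiliary lemma with $B=A^{\sigma}$ yields one implication; the converse is automatic, either because ``Morita equivalent'' is a symmetric relation, or by running the argument again with the $2$-cocycle $\sigma^{-1}$, for which $(A^{\sigma})^{\sigma^{-1}}=A$. I expect the only point requiring care to be the middle step — verifying that finite generation and the Artinian (hence locally Artinian) condition genuinely transfer across an equivalence of module categories. This is routine rather than deep: each such property is a statement about the poset of submodules together with standard categorical constructions, all of which an equivalence of abelian categories respects, so I anticipate bookkeeping, not a real obstacle. The substance of the Corollary is carried by the Lemma and the cited theorem.
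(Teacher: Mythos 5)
Your proposal is correct and follows exactly the paper's route: graded equivalence of $\gr{A}$ and $\gr{A}^{\sigma}$ from the Lemma, upgraded to a Morita equivalence via the Menini--N\u{a}st\u{a}sescu theorem, and then Morita invariance of $(\diamond)$ (which the paper simply asserts and you spell out, correctly, via preservation of simples, injective hulls, and the finitely generated and Artinian conditions).
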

\section{Cocycle twists of Lie color algebras}
A \emph{commutation factor} $\varepsilon$ on $G$ with values in $k^{\ast}$ is a mapping $\varepsilon: G\times G \To k^{\ast}$ such that
\[\varepsilon(a,b)\varepsilon(b,a) = 1,\quad \varepsilon(a,b+c) = \varepsilon(a,b)\varepsilon(a,c),\quad \varepsilon(a+b,c) = \varepsilon(a,c)\varepsilon(b,c)\]
for all $a,b,c \in G$. It is easy to see from the definition that $\varepsilon(a,a)=\pm 1$ for every $a \in G$ and accordingly we define two sets $G_{+} = \{a \in G \mid \varepsilon(a,a) = 1\}$ and $G_{-} = \{a \in G \mid \varepsilon(a,a) = -1\}$.  Here, $G_{+}$ is a subgroup of $G$ of index at most two and $G_{-}$ is the other coset when the index of $G_{+}$ is two. 

A $G$-graded \emph{Lie color algebra} $L$ with a commutation factor $\varepsilon$ is a $G$-graded vector space $L = \oplus_{g\in G}L_{g}$ with a bracket operation $[,]: L \times L \To L$ which satisfies $[L_{g},L_{h}] \subseteq L_{g+h}$ for all $g,h \in G$, along with color skew symmetry and color Jacobi identities: 

\begin{equation}
[x,y] = -\varepsilon(|x|,|y|)[y,x],
\end{equation}
\begin{equation}
[[x,y],z] = [x,[y,z]] - \varepsilon(|x|,|y|)[y,[x,z]]
\end{equation}
for all homogeneous elements $x,y,z$ of $L$. For example, if $\varepsilon$ is chosen to be the trivial commutation factor defined by $\varepsilon(a,b) = 1$ for all $a,b \in G$, then $L$ is nothing but a $G$-graded ordinary Lie algebra. If $G = \mathbb{Z}_{2}$ and $\varepsilon$ is defined as $\varepsilon(a,b) = (-1)^{ab}$ for all $a,b \in \mathbb{Z}_{2}$, then $L$ is a Lie superalgebra. 

Like the superalgebra case, we have the notion of even and odd elements for Lie color algebras. For a Lie color algebra $L$, we define two sets $L_{+} = \oplus_{g\in G_{+}}L_{g}$ and $L_{-}= \oplus_{g\in G_{-}}L_{g}$. The elements of $L_{+}$ are called \emph{even} while the elements of $L_{-}$ are called the \emph{odd} elements of $L$. 

The \emph{universal enveloping algebra} $U(L)$ of a Lie color algebra $L$ is defined as
\[U(L) = T(L)/J(L)\]
where $T(L)$ is the tensor algebra of $L$ and $J(L)$ is the ideal of $T(L)$ which is generated by the elements $x \otimes y - \varepsilon(|x|,|y|)y\otimes x - [x,y]$ for every homogeneous elements $x,y$ of $L$. The enveloping algebra $U(L)$ of a Lie color algebra is a $G$-graded associative algebra and it is well-known that when $L$ is finite dimensional $U(L)$ is Noetherian. 

Let $L$ be a $G$-graded Lie color algebra with a commutation factor $\varepsilon$. For a 2-cocycle $\sigma$ on $G$, if we define a new multiplication on $L$ by
\begin{equation}\label{cocycle-twist-for-Lie-color-algebras}
[x,y]^{\sigma} = \sigma(|x|,|y|)[x,y]
\end{equation}
for every homogeneous elements $x$ and $y$ of $L$, then with this multiplication $L^{\sigma}$ becomes a $G$-graded Lie color algebra with commutation factor $\varepsilon '= \varepsilon\delta$ where $\varepsilon'(a,b) = \varepsilon(a,b)\delta(a,b)$ and $\delta(a,b) = \sigma(a,b)/\sigma(b,a)$ for all $a,b\in G$. 

The \emph{descending central sequence} of a Lie color algebra $L$ is defined as 
\[\mathcal{C}^{0}(L) = L, \quad \mathcal{C}^{n+1}(L) = [\mathcal{C}^{n}(L), L], \ \forall n \geq 0.\]
$L$ is called \emph{nilpotent} if $\mathcal{C}^{n}(L) = 0$ for some $n$. Before we proceed any further, we note the following easy fact that being nilpotent is preserved under cocycle twists.
\begin{lem}\label{cocycle-twists-are-nilpotent}
Let $L$ be a $G$-graded Lie color algebra and let $\sigma$ be a 2-cocycle on $G$. Then, $L$ is nilpotent if and only if $L^{\sigma}$ is nilpotent.
\end{lem}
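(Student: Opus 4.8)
The plan is to show that the cocycle twist does not change the underlying graded vector space, nor the support of any term in the descending central sequence, so that nilpotency is visibly a property of the ``shape'' of the bracket rather than of the scalars attached to it. Concretely, I will argue by induction that for every $n \geq 0$ the homogeneous components of $\mathcal{C}^{n}(L)$ and $\mathcal{C}^{n}(L^{\sigma})$ coincide as subspaces of the common graded vector space $L$, i.e.\ $\mathcal{C}^{n}(L)_{g} = \mathcal{C}^{n}(L^{\sigma})_{g}$ for all $g \in G$.

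The base case $n = 0$ is immediate since $\mathcal{C}^{0}(L) = L = L^{\sigma}$ as graded vector spaces. For the inductive step, observe that $\mathcal{C}^{n+1}(L)$ is spanned by homogeneous elements of the form $[x,y]$ with $x \in \mathcal{C}^{n}(L)$ homogeneous and $y \in L$ homogeneous, while $\mathcal{C}^{n+1}(L^{\sigma})$ is spanned by the elements $[x,y]^{\sigma} = \sigma(|x|,|y|)[x,y]$ with $x \in \mathcal{C}^{n}(L^{\sigma})$ and $y \in L$. By the induction hypothesis the homogeneous elements available for $x$ are the same in both cases, and since $\sigma$ takes values in $k^{\ast}$, the scalar $\sigma(|x|,|y|)$ is invertible; hence $[x,y]^{\sigma}$ and $[x,y]$ span the same subspace. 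Therefore $\mathcal{C}^{n+1}(L)_{g} = \mathcal{C}^{n+1}(L^{\sigma})_{g}$ for every $g$, completing the induction. In particular $\mathcal{C}^{n}(L) = 0$ if and only if $\mathcal{C}^{n}(L^{\sigma}) = 0$, which is exactly the claim; and applying this to the twist by $\sigma^{-1}$, which recovers $L$ from $L^{\sigma}$, gives the converse direction for free.

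I do not anticipate a serious obstacle here — the statement is genuinely an ``easy fact'' as the authors say. The only point requiring a little care is making sure that when one unwinds iterated brackets $[[\cdots[x_{1},x_{2}],\ldots],x_{m}]$ in $L^{\sigma}$ the accumulated product of $\sigma$-values is a single nonzero scalar multiple of the corresponding iterated bracket in $L$; this is automatic because at each stage one multiplies by one more factor from $k^{\ast}$, and one never needs an explicit formula for the product. A marginally slicker alternative would be to invoke the fact recorded just before Lemma~\ref{cocycle-twists-are-nilpotent} that $L^{\sigma}$ is again a Lie color algebra and note that the map $L \to L^{\sigma}$, while not a Lie homomorphism, is a graded linear isomorphism under which the ideals $\mathcal{C}^{n}$ correspond; but the inductive argument above is already short enough that I would simply present it directly.
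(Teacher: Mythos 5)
Your proof is correct and is essentially the paper's own argument: the paper simply asserts that $\mathcal{C}^{i+1}(L^{\sigma}) = \mathcal{C}^{i+1}(L)$ as $k$-spaces for all $i$, which is exactly the induction (brackets differing only by invertible scalars $\sigma(|x|,|y|) \in k^{\ast}$) that you have spelled out. Your version just makes the ``it is clear'' step explicit, which is fine.
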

\begin{proof}
Obviously, $\mathcal{C}^{0}(L^{\sigma}) = L = \mathcal{C}(L)$. It is also clear that as $k$-spaces $\mathcal{C}^{i+1}(L^{\sigma}) = \mathcal{C}^{i+1}(L)$ for any $i \geq 0$. Hence, $\mathcal{C}^{n}(L) = 0$ if and only if $\mathcal{C}^{n}(L^{\sigma}) = 0$, \textit{i.e.} $L$ is nilpotent if and only if $L^{\sigma}$ is nilpotent.
\end{proof}
Cocycle twists can be applied to pass from a Lie color algebra to a Lie superalgebra and we briefly explain this process now. Let $L$ be a $G$-graded Lie color algebra with a commutation factor $\varepsilon$. If $\sigma$ is a 2-cocycle on $G$, we know that $L^{\sigma}$ is a $G$-graded Lie color algebra with a commutation factor $\varepsilon' = \varepsilon\delta$, where $\delta$ is as in the lemma. Let $\varepsilon_{0}: G \times G\To k^{*}$ denote the superalgebra commutation factor, which is defined by the rule 
\[\varepsilon_{0}(a,b) = 1\]
if $a\in G_{+}$ or $b\in G_{+}$ and 
\[\varepsilon_{0}(a,b) = -1\]
when both $a,b \in G_{-}$. The following result is the crucial step in the process of passing from a Lie color algebra to a Lie superalgebra.
\begin{thm}\cite[Theorem 2.3]{bahturin-montgomery}
Let $G$ be an arbitrary abelian group, and $k$ an arbitrary commutative ring with $1$ and with group of units $k^{*}$. Then for any commutation factor $\varepsilon:G \times G \To k^{*}$ there exists a 2-cocycle $\sigma:G \times G \To k^{*}$ such that if we set $\delta(g,h) = \sigma(g,h)/\sigma(h,g)$, then $\varepsilon\delta = \varepsilon_{0}$.
\end{thm}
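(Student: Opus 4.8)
The plan is to reduce the statement to a question in bilinear algebra and then settle that question by an explicit formula, patched together by a transfinite induction in order to accommodate an arbitrary abelian $G$. For the reduction, recall that inverses and products of commutation factors are again commutation factors, so $\beta := \varepsilon_0\varepsilon^{-1}$ (taken pointwise) is a commutation factor, and the conclusion $\varepsilon\delta = \varepsilon_0$ is precisely the assertion that $\sigma(g,h)/\sigma(h,g) = \beta(g,h)$ for all $g,h\in G$. The key observation is that $\beta$ is \emph{alternating}: by the very definition of $G_{+}$ and $G_{-}$ one has $\varepsilon(a,a) = \varepsilon_0(a,a)$ for every $a$ (both equal $1$ on $G_{+}$ and $-1$ on $G_{-}$), hence $\beta(a,a)=1$ for all $a$; since $\beta$ is a bicharacter with $\beta(g,h)\beta(h,g)=1$, this says exactly that $\beta$ is an alternating bicharacter. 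Thus it suffices to show: for every alternating bicharacter $\beta\colon G\times G\To k^{*}$ there is a normalized $2$-cocycle $\sigma$ on $G$ with $\sigma(g,h)/\sigma(h,g)=\beta(g,h)$.

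Assume first that $G=\bigoplus_{i\in I}\langle e_i\rangle$ is a direct sum of cyclic groups and fix a well-ordering of $I$; let $d_i$ be the order of $e_i$ (possibly infinite) and $q_{ij}=\beta(e_i,e_j)$, so that $q_{ij}q_{ji}=1$, $q_{ii}=1$ and $q_{ij}^{d_i}=q_{ij}^{d_j}=1$. Writing $g=\sum_i g_ie_i$, put
\[ \sigma(g,h)=\prod_{i<j} q_{ij}^{\,g_ih_j}. \]
A short computation shows that $\sigma$ is a normalized $2$-cocycle — the point being that the exponents of the $q_{ij}$ may be reduced modulo $d_i$ and $d_j$ without affecting the value, so the reductions occurring in $g+h$ are harmless — and a second short computation using $q_{ji}=q_{ij}^{-1}$ and $q_{ii}=1$ gives $\sigma(g,h)/\sigma(h,g)=\prod_{i,j}q_{ij}^{\,g_ih_j}=\beta(g,h)$. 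This already handles every finitely generated $G$.

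For arbitrary $G$ — which need not be a direct sum of cyclic groups — I would apply Zorn's lemma to the poset of pairs $(H,\sigma_H)$, where $H\le G$ and $\sigma_H$ is a normalized $2$-cocycle on $H$ with $\sigma_H(h_1,h_2)/\sigma_H(h_2,h_1)=\beta(h_1,h_2)$, ordered by restriction. Unions along chains are again such pairs, so a maximal $(H,\sigma_H)$ exists, and one must show $H=G$. Given $g\notin H$, set $H'=H+\langle g\rangle$ and let $n$ be the least positive integer with $ng\in H$ (possibly $n=\infty$); the character $\chi=\beta(-,g)\colon H\To k^{*}$ is trivial on $ng$ when $n<\infty$, since $\beta(ng,g)=\beta(g,g)^{n}=1$. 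One then extends $\sigma_H$ to $H'$ by $\sigma_{H'}(h_1+ag,\,h_2+bg)=\sigma_H(h_1,h_2)\,\chi(h_1)^{b}$ if $n=\infty$, and by the same expression multiplied by a ``carry'' factor $\lambda^{\epsilon(a,b)}$, with $\epsilon(a,b)\in\{0,1\}$ recording whether $a+b\ge n$ and $\lambda\in k^{*}$ to be chosen, if $n<\infty$. Because $\epsilon$ is symmetric in $a$ and $b$, in either case $\sigma_{H'}(x,y)/\sigma_{H'}(y,x)$ comes out to $\beta|_{H'}$.

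The real work — and the main obstacle — is to verify that $\sigma_{H'}$ is again a $2$-cocycle in the case $n<\infty$, that is, that $\lambda$ can be picked compatibly with the reductions modulo $n$ in the $\langle g\rangle$-component; in the language of central extensions this is the statement that the central extension $1\to k^{*}\to E\to H\to 1$ encoded by $\sigma_H$ can be extended across the cyclic quotient $H'/H$ by a lift $\widetilde g$ with $\widetilde g\,\widetilde h\,\widetilde g^{-1}=\beta(g,h)\widetilde h$ and $\widetilde g^{\,n}$ equal to a prescribed lift of $ng$, the compatibility $\beta(ng,h)=\beta(g,h)^{n}$ being exactly what makes it consistent. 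Alternatively, one can simply quote that $H_2(G,\Z)\cong\wedge^{2}G$ for abelian $G$, so that the universal coefficient theorem makes the commutator-pairing map $H^{2}(G,k^{*})\To\Hom(\wedge^{2}G,k^{*})$ surjective — but the inductive argument above is elementary and self-contained.
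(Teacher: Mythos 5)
First, note that the paper does not prove this statement at all: it is quoted from \cite[Theorem 2.3]{bahturin-montgomery}, so your argument can only be measured against the cited literature and on its own terms. Your reduction is correct: $\beta=\varepsilon_{0}\varepsilon^{-1}$ is a bicharacter with $\beta(a,a)=1$ (since $\varepsilon_{0}(a,a)=\varepsilon(a,a)$ by the definition of $G_{+}$ and $G_{-}$), and the theorem is exactly the assertion that every alternating bicharacter is the skew part $\sigma(g,h)/\sigma(h,g)$ of a normalized $2$-cocycle. Your formula $\sigma(g,h)=\prod_{i<j}q_{ij}^{g_{i}h_{j}}$ is bimultiplicative, hence automatically a normalized cocycle, and its skew part is $\beta$; this disposes of every $G$ that is a direct sum of cyclic groups, in particular every finitely generated $G$. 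The cohomological fallback you mention is also a complete argument: $H_{2}(G,\Z)\cong G\wedge G$ for an arbitrary abelian group together with the universal coefficient sequence gives surjectivity of $H^{2}(G,k^{*})\To \Hom(G\wedge G,k^{*})$ for an arbitrary abelian group of coefficients $k^{*}$, and (a point worth stating explicitly) under the isomorphism $H_{2}(G,\Z)\cong G\wedge G$ the evaluation map of the universal coefficient theorem is precisely the commutator pairing $[\sigma]\mapsto\sigma(g,h)\sigma(h,g)^{-1}$.

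The genuine gap is exactly the step you yourself defer as ``the real work'': in the case $n<\infty$ the ansatz $\sigma_{H'}(h_{1}+ag,h_{2}+bg)=\sigma_{H}(h_{1},h_{2})\chi(h_{1})^{b}\lambda^{(\mathrm{carry})}$ with a \emph{constant} carry factor $\lambda$ does not satisfy the cocycle identity for a general maximal pair $(H,\sigma_{H})$, so no choice of $\lambda$ completes the Zorn step as written. The reason is that a carry in the $\langle g\rangle$-coordinate shifts the $H$-components by $ng$ and reduces the exponent of $\chi(h_{1})$ from $b+c$ to $b+c-n$, and neither effect is constant: $\chi(h_{1})^{n}=\beta(h_{1},ng)$ and $\sigma_{H}(\,\cdot\,,ng)$ are in general nontrivial. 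Concretely, with $G=\Z u\oplus\Z g$, $H=\langle u,2g\rangle$, $\beta(u,g)=q$ and $\sigma_{H}$ chosen with $\sigma_{H}(u,2g)=q$, the identity applied to $x=u$, $y=z=g$ forces $\sigma_{H}(u,2g)=q^{2}$, independently of $\lambda$ --- a contradiction for generic $q$. The repair is the central-extension construction you sketch, carried out rather than asserted: with $\phi(x)=\beta(g,\pi(x))x$ one checks that $t^{n}\widetilde{ng}^{-1}$ is central in $E\rtimes_{\phi}\Z$ (this is where $\beta(g,ng)=1$ and $\beta(ng,h)=\beta(g,h)^{n}$ enter), passes to the quotient, and reads off from the section $h+ag\mapsto\widetilde{h}\,t^{a}$ the cocycle $\sigma_{H'}(h_{1}+ag,h_{2}+bg)=\sigma_{H}(h_{1},h_{2})\,\beta(g,h_{2})^{a}\,\sigma_{H}(h_{1}+h_{2},ng)^{(\mathrm{carry})}$, whose correction term depends on $h_{1}+h_{2}$ and not merely on the carry. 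So the theorem is indeed reachable by your strategy, and your UCT route is sound as quoted, but the ``elementary and self-contained'' inductive argument is not complete as written: its key verification is deferred, and the explicit formula it would rest on fails.
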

That is, for any $G$-graded Lie color algebra $L$ with a commutation factor $\varepsilon$, we can find a 2-cocycle $\sigma$ such that the twisted Lie color algebra $L^{\sigma}$ is a Lie superalgebra $L^{\sigma} = L_{0}^{\sigma} \oplus L^{\sigma}_{1}$ where $L_{0}^{\sigma} = L_{+}$ and $L_{1}^{\sigma} = L_{-}$. 

Hence, the discussions of the previous section apply to Lie color algebras as well and there is a corresponding equivalence of categories between the graded representations of a Lie color algebra $L$ and the graded representations of its cocycle twist $L^{\sigma}$. While this correspondence only exists between the graded modules over Lie color algebras and Lie superalgebras, we are interested in modules over the enveloping algebras of such algebras. Fortunately, the cocycle twists of Lie color algebras are connected to their enveloping algebras in the following way.
\begin{lem}\cite[Theorem 2]{scheunert}\label{isomorphism}
Let $L$ be a $G$-graded Lie color algebra and let $\sigma$ be a 2-cocycle on $G$. Then there is an algebra isomorphism
\[U(L^{\sigma}) \cong U(L)^{\sigma}\]
where $U(L)$ is considered as a $G$-graded associative algebra and $U(L)^{\sigma}$ is its cocycle twist.
\end{lem}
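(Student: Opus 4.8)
The plan is to write down the isomorphism explicitly using the universal property of the enveloping algebra of a Lie color algebra; this is, in essence, Scheunert's argument. Recall that $U(L)^{\sigma}$ has the same underlying $G$-graded vector space as $U(L)$, with product $a\cdot_{\sigma}b=\sigma(|a|,|b|)ab$; that $U(L)^{\sigma}$ carries a Lie color algebra structure for the commutation factor $\varepsilon'=\varepsilon\delta$, with $\delta(a,b)=\sigma(a,b)/\sigma(b,a)$, via the $\varepsilon'$-commutator $a\cdot_{\sigma}b-\varepsilon'(|a|,|b|)\,b\cdot_{\sigma}a$; and that by \eqref{cocycle-twist-for-Lie-color-algebras} the bracket on $L^{\sigma}$ is $[x,y]^{\sigma}=\sigma(|x|,|y|)[x,y]$. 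The identity doing all the work is
\[
\varepsilon'(a,b)\,\sigma(b,a)=\varepsilon(a,b)\,\delta(a,b)\,\sigma(b,a)=\varepsilon(a,b)\,\sigma(a,b),\qquad a,b\in G.
\]

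First I would take the canonical $G$-graded linear inclusion $\iota\colon L\To U(L)$ and reinterpret it, unchanged, as a $G$-graded linear map $\varphi\colon L^{\sigma}\To U(L)^{\sigma}$ (the underlying graded vector spaces of source and target do not change). For homogeneous $x,y\in L^{\sigma}$, using the defining relation $xy-\varepsilon(|x|,|y|)yx=[x,y]$ of $U(L)$ together with the identity above, one computes
\[
\varphi(x)\cdot_{\sigma}\varphi(y)-\varepsilon'(|x|,|y|)\,\varphi(y)\cdot_{\sigma}\varphi(x)=\sigma(|x|,|y|)\bigl(xy-\varepsilon(|x|,|y|)\,yx\bigr)=\sigma(|x|,|y|)\,[x,y]=[x,y]^{\sigma}=\varphi\bigl([x,y]^{\sigma}\bigr).
\]
Hence $\varphi$ is a homomorphism from the Lie color algebra $L^{\sigma}$ into $U(L)^{\sigma}$ equipped with its $\varepsilon'$-commutator, so the universal property of $U(L^{\sigma})$ yields a unique $G$-graded algebra homomorphism $\Phi\colon U(L^{\sigma})\To U(L)^{\sigma}$ extending $\varphi$.

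To obtain the inverse, I would run exactly the same construction once more, this time starting from the Lie color algebra $L^{\sigma}$ (commutation factor $\varepsilon'$) and the $2$-cocycle $\sigma^{-1}$; since $(L^{\sigma})^{\sigma^{-1}}=L$ with commutation factor $\varepsilon'\delta^{-1}=\varepsilon$, this produces a $G$-graded algebra homomorphism $\Psi\colon U(L)\To U(L^{\sigma})^{\sigma^{-1}}$. A graded algebra homomorphism $f\colon B\To C$ is at the same time a graded algebra homomorphism $B^{\sigma}\To C^{\sigma}$ (the twisting scalars cancel), so $\Psi$ twists to a $G$-graded algebra homomorphism $\Psi^{\sigma}\colon U(L)^{\sigma}\To\bigl(U(L^{\sigma})^{\sigma^{-1}}\bigr)^{\sigma}=U(L^{\sigma})$. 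Both $\Psi^{\sigma}\circ\Phi$ and $\Phi\circ\Psi^{\sigma}$ are algebra endomorphisms, and unwinding the definitions shows that the restriction of $\Psi^{\sigma}\circ\Phi$ to $L^{\sigma}\subseteq U(L^{\sigma})$ and the restriction of $\Phi\circ\Psi^{\sigma}$ to $L\subseteq U(L)^{\sigma}$ are the respective canonical inclusions; since these subspaces generate the algebras, both composites are the identity. Therefore $\Phi$ is an isomorphism, and in fact a $G$-graded one.

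I do not expect a genuine obstacle: the substance is bookkeeping --- keeping straight which of $\varepsilon,\varepsilon'$ and which of $\sigma,\sigma^{-1}$ is in play at each stage, and noting that twisting is functorial on graded algebra homomorphisms so that $\Psi^{\sigma}$ is legitimate. If one wishes to bypass the universal property, there is a hands-on variant: since $T(L^{\sigma})=T(L)$ as graded algebras, define the graded vector space automorphism $\theta$ of $T(L)$ that rescales the component $L_{g_{1}}\otimes\cdots\otimes L_{g_{n}}$ by $\prod_{i=2}^{n}\sigma(g_{1}+\cdots+g_{i-1},g_{i})$; the cocycle condition \eqref{2-cocycle} makes $\theta$ well defined and makes $\theta\colon T(L^{\sigma})\To T(L)^{\sigma}$ an algebra isomorphism, while the identity $\varepsilon'(a,b)\sigma(b,a)=\varepsilon(a,b)\sigma(a,b)$ shows that $\theta$ sends each generator of $J(L^{\sigma})$ to a nonzero scalar multiple of a generator of $J(L)$, hence $J(L^{\sigma})$ onto $J(L)$ inside $T(L)^{\sigma}$; passing to quotients gives $U(L^{\sigma})\cong U(L)^{\sigma}$.
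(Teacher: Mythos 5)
Your proof is correct, and it is essentially the argument of the cited source: the paper itself gives no proof of this lemma (it quotes Scheunert's Theorem 2), and Scheunert's proof likewise rests on the identity $\varepsilon'(a,b)\sigma(b,a)=\varepsilon(a,b)\sigma(a,b)$ together with the universal property of the enveloping algebra, exactly as you do. Both your main route and the tensor-algebra variant check out, so there is nothing to add beyond the bookkeeping you already indicate.
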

Together with Corollary~\ref{the-corollary}, we have:
\begin{lem}\label{the-lemma}
Let $L$ be a $G$-graded Lie color algebra and let $\sigma$ be a 2-cocycle on $G$. Then $U(L)$ has property $(\diamond)$ if and only if $U(L^{\sigma})$ does.
\end{lem}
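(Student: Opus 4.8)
The plan is to obtain this as an immediate consequence of the two results just quoted, Lemma~\ref{isomorphism} and Corollary~\ref{the-corollary}. First I would note that $U(L)$ is a $G$-graded associative $k$-algebra (this has already been recorded in the excerpt), so Corollary~\ref{the-corollary} applies with $A = U(L)$: the algebra $U(L)$ has property $(\diamond)$ if and only if its cocycle twist $U(L)^{\sigma}$ does. This is the only place where the real content — the graded/Morita machinery of Section~2 — is invoked, and it is used as a black box.

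Next I would invoke Lemma~\ref{isomorphism} (Scheunert's theorem), which furnishes an algebra isomorphism $U(L^{\sigma}) \cong U(L)^{\sigma}$. Since property $(\diamond)$ is formulated entirely in terms of the category of right modules (injective hulls of simple modules being locally Artinian), it is transported along any isomorphism of rings; indeed it is a Morita invariant, which is more than enough here. Hence $U(L)^{\sigma}$ has property $(\diamond)$ if and only if $U(L^{\sigma})$ does.

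Chaining the two equivalences — $U(L)$ has $(\diamond)$ $\iff$ $U(L)^{\sigma}$ has $(\diamond)$ $\iff$ $U(L^{\sigma})$ has $(\diamond)$ — yields the claim. There is essentially no obstacle to overcome: all of the work has been front-loaded into Lemma~\ref{isomorphism} and Corollary~\ref{the-corollary}, and the only thing one must be slightly careful about is that the hypotheses of Corollary~\ref{the-corollary} (a $G$-graded associative $k$-algebra together with a $2$-cocycle on $G$) are indeed met by $U(L)$ and $\sigma$, which is the case. So the proof is a two-line composition of the cited statements.
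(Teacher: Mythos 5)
Your proposal is correct and is exactly the paper's argument: the paper derives this lemma by combining Scheunert's isomorphism $U(L^{\sigma})\cong U(L)^{\sigma}$ (Lemma~\ref{isomorphism}) with Corollary~\ref{the-corollary} applied to the graded algebra $U(L)$, just as you do. No difference in approach and no gaps.
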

The previous lemma and the Morita equivalence between $U(L)$ and its cocycle twists allow us to reduce our study to that of the enveloping algebra of a Lie superalgebra. Recall that a \emph{central abelian direct factor} of a Lie algebra $L$ is an abelian Lie subalgebra $A$ of $L$ such that $L = A \times B$ for some Lie subalgebra $B$ of $L$. We can now state the main result of this note.
\begin{thm}\label{diamond-property-for-color}
Let $L$ be a finite dimensional nilpotent $G$-graded Lie color algebra with a commutation factor $\varepsilon$. Let $\sigma$ be the 2-cocycle such that the twisted algebra $L^{\sigma}$ is a $G$-graded Lie superalgebra with $L_{0}^{\sigma} = L_{+}$ and $L_{1}^{\sigma} = L_{-}$. The following statements are equivalent. 
\begin{itemize}
\item[(i)] $U(L)$-Mod has property $(\diamond)$;
\item[(ii)] $U(L^{\sigma})$-Mod has property $(\diamond)$;
\item[(iii)] $U(L_{0}^{\sigma})$-Mod has property $(\diamond)$;
\item[(iv)] Up to a central abelian direct factor, $L_{0}^{\sigma}$ is isomorphic to either
\begin{itemize}
	\item[(a)] a nilpotent Lie algebra with an abelian ideal of codimension one,
	\item[(b)] the five dimensional Lie algebra $L_{5}$  with basis $\{e_{1},\ldots, e_{5}\}$ and nonzero brackets $[e_{1},e_{2}] = e_{3},\ [e_{1},e_{3}]=e_{4},\ [e_{2},e_{3}]=e_{5}$,
	\item[(c)] the six dimensional Lie algebra $L_{6}$ with basis $\{e_{1}, \ldots, e_{6}\}$ and nonzero brackets $[e_{1},e_{3}]=e_{4},\ [e_{2},e_{3}]=e_{5}, \ [e_{1},e_{2}]=e_{6}$. 
\end{itemize}
\end{itemize}
\end{thm}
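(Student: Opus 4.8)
The plan is to prove the four-way equivalence by a chain
$\mathrm{(i)}\Leftrightarrow\mathrm{(ii)}\Leftrightarrow\mathrm{(iii)}\Leftrightarrow\mathrm{(iv)}$, where the first equivalence is the genuinely ``color'' content and is handled entirely by the cocycle-twist machinery of Sections 2 and 3, while the remaining equivalences are imported from the classification of Lie superalgebras with property $(\diamond)$ in \cite{hatipoglu-lomp}. No new module theory is needed. For $\mathrm{(i)}\Leftrightarrow\mathrm{(ii)}$: by the Bahturin--Montgomery theorem there is a $2$-cocycle $\sigma$ on $G$ with $\varepsilon\delta=\varepsilon_{0}$, and for this particular $\sigma$ the twisted algebra $L^{\sigma}$ is precisely the $G$-graded Lie superalgebra with $L_{0}^{\sigma}=L_{+}$ and $L_{1}^{\sigma}=L_{-}$; since it has the same underlying vector space as $L$ it is finite dimensional, and by Lemma~\ref{cocycle-twists-are-nilpotent} it is again nilpotent. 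Then Lemma~\ref{the-lemma} --- which packages Scheunert's isomorphism $U(L^{\sigma})\cong U(L)^{\sigma}$ (Lemma~\ref{isomorphism}) together with the Morita invariance of $(\diamond)$ recorded in Corollary~\ref{the-corollary} --- gives immediately that $U(L)$ has property $(\diamond)$ if and only if $U(L^{\sigma})$ does. (Although $(\diamond)$ was defined via right modules whereas the statement speaks of left modules, the two versions coincide here because the enveloping algebras in question carry principal anti-automorphisms; I would dispose of this matching at the outset.)

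Next, for $\mathrm{(ii)}\Leftrightarrow\mathrm{(iii)}\Leftrightarrow\mathrm{(iv)}$, I would observe that the commutation factor $\varepsilon_{0}$ factors through the quotient $G\to G/G_{+}$, so the finer $G$-grading on $L^{\sigma}$ plays no role when one forms the associative algebra $U(L^{\sigma})$: as a ring it is simply the universal enveloping algebra of the finite dimensional nilpotent Lie superalgebra with even part $L_{+}$ and odd part $L_{-}$. The main result of \cite{hatipoglu-lomp} then applies verbatim: for such a Lie superalgebra $U(\cdot)$ has property $(\diamond)$ exactly when $U$ of its even part does, and a finite dimensional nilpotent Lie algebra has property $(\diamond)$ precisely when, up to a central abelian direct factor, it is one of the algebras listed in (a)--(c). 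Chaining these statements yields $\mathrm{(ii)}\Leftrightarrow\mathrm{(iii)}$ and $\mathrm{(iii)}\Leftrightarrow\mathrm{(iv)}$. In the degenerate case $G_{-}=\varnothing$ there are no odd elements, $L^{\sigma}=L_{0}^{\sigma}$ is a Lie algebra, and only the last step of this argument is needed.

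Thus the substance is carried by the cited results, and I do not expect any new computation. The only points demanding care are bookkeeping ones: using the Bahturin--Montgomery cocycle consistently throughout; checking that finite dimensionality and nilpotency are preserved under the twist (the latter being exactly Lemma~\ref{cocycle-twists-are-nilpotent}); and --- the step I regard as the main thing to get right --- verifying that the $G$-grading on $L^{\sigma}$ is genuinely irrelevant to $U(L^{\sigma})$ viewed as an associative algebra, so that the a priori only $\mathbb{Z}_{2}$-graded classification of \cite{hatipoglu-lomp} does in fact apply in this more general $G$-graded setting.
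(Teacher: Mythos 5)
Your proposal is correct and follows essentially the same route as the paper: the equivalence $\mathrm{(i)}\Leftrightarrow\mathrm{(ii)}$ via Lemma~\ref{the-lemma} (Scheunert's isomorphism plus Morita invariance of $(\diamond)$), and $\mathrm{(ii)}\Leftrightarrow\mathrm{(iii)}\Leftrightarrow\mathrm{(iv)}$ via Lemma~\ref{cocycle-twists-are-nilpotent} and the classification in \cite{hatipoglu-lomp}. Your extra bookkeeping remarks (left--right matching via the principal anti-automorphism, and the observation that only the induced $\mathbb{Z}_{2}$-coarsening of the $G$-grading matters for $U(L^{\sigma})$ as an associative algebra) are points the paper leaves implicit, but they do not change the argument.
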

\begin{proof}
The equivalence of $(i)$ and $(ii)$ follows from Lemma~\ref{the-lemma}. $(ii) \Leftrightarrow (iii) \Leftrightarrow (iv)$ from Lemma~\ref{cocycle-twists-are-nilpotent} and \cite[Theorem 1.1]{hatipoglu-lomp}. 
\end{proof}
\section{Nilpotent Lie algebras of almost maximal index}
A Lie algebra $\g$ is said to have \textit{almost maximal index} if $\textrm{ind}(\g) = \dim(\g) - 2$. According to \cite[Proposition 5.3]{hatipoglu-lomp}, a finite dimensional nilpotent Lie algebra has almost maximal index if and only if it is one of the Lie algebras appearing in Theorem~\ref{diamond-property-for-color}(iv).

Finite dimensional Lie algebras with an abelian ideal of codimension one are in one-to-one correspondence with finite dimensional vector spaces $V$ and nilpotent endomorphisms $f:V \To V$. Given such a vector space and a nilpotent endomorphism, one defines a Lie algebra structure on the vector space $L = kx \oplus V$ by defining $[x,v] = f(v)$. A typical example of this construction is the $n$-dimensional standard filiform Lie algebra, which is the Lie algebra $L_{n}$ with a basis $\{e_{1}, e_{2}, \ldots, e_{n}\}$ and whose nonzero brackets are $[e_{1}, e_{i}] = e_{i+1}$ for $i = 2, 3, \ldots, n-1$. For instance the three dimensional Heisenberg Lie algebra is the three dimensional standard filiform Lie algebra. 

The $n$-dimensional standard filiform Lie algebra $L_{n}$ is realized in the above sense as a vector space $L_{n} = kx_{1} \oplus V$ where $V$ is the span of the elements $x_{2}, x_{2}, \ldots, x_{n}$ with a nilpotent endomorphism $f: V \To V$ given by $f(x_{i}) = x_{i+1}$ for all $i = 2, 3, \ldots, n-1$. 

It turns out that the standard filiform Lie algebra is the only finite dimensional nilpotent Lie algebra with an abelian ideal of codimension one, up to isomorphism. First note that for any nilpotent endomorphism $f$ on a finite dimensional vector space $V$, there exists a basis $\{v_{1}, v_{2}, \ldots, v_{n}\}$ of $V$ such that $f(v_{i})$ is either zero or equal to $v_{i+1}$ for all $i=1,2,\ldots, n-1$. Now, let $L = kx \oplus V$ be a finite dimensional nilpotent Lie algebra with an abelian ideal $V$. Then, $V$ has a basis $\{v_{1}, v_{2}, \ldots, v_{n}\}$ such that $[x,v_{i}]$ is either zero or $v_{i+1}$, for all $i = 1,2,\ldots, n-1$. Without loss of generality, we may assume that $[x,v_{1}] \neq 0$, applying a change of basis if necessary. If $[x, v_{i}]$ is nonzero for each $i$, then $L$ is standard filiform. If $[x,v_{i}] = 0$ for some $i > 1$, then $L$ can be written as the direct sum of a standard filiform Lie algebra $L_{1} = \langle x, v_{1}, v_{2}, \ldots,v_{i}\rangle$ and an abelian Lie ideal $L_{2} = \langle v_{i+1}, v_{i+2}, \ldots, v_{n}\rangle$, therefore proving our claim.

The above discussion shows that if $L$ is a finite dimensional nilpotent Lie algebra, then $U(L)$ satisfies property $(\diamond)$ if and only if $L$ is isomorphic, up to a central abelian direct factor, to $L_{5}, L_{6}$ or to a standard filiform Lie algebra. 

\section{Gradings on nilpotent Lie algebras of almost maximal index}
In this section we collect together results on the classification of group gradings on finite dimensional nilpotent Lie algebras of almost maximal index. Two gradings $\Gamma: L = \oplus L_{g}$ and $\Gamma ': \oplus_{g}L'_{g}$ on a Lie algebra $L$ by an abelian group $G$ are called \textit{isomorphic} if there is a Lie algebra automorphism $\varphi: L \To L$ such that $\varphi(L_{g}) = L_{g}'$. 

\subsection{Gradings on $L_{5}$ and $L_{6}$}

Let us first consider the gradings on $L_{5}$ and $L_{6}$. These are free nilpotent Lie algebras, in particular, $L_{5}$ is the free Lie algebra with two generators of step three and $L_{6}$ is the free nilpotent Lie algebra with three generators of step two. Finite dimensional free nilpotent Lie algebras are members of the larger class of \textit{free algebras of finite rank in nilpotent varieties of algebras}. The gradings on these algebras have been classified by Bahturin in \cite{bahturin}. If $F_{n}$ is such an algebra with a generating set $\{x_{1}, x_{2}, \ldots, x_{n}\}$ then there is a standard $\Z^{n}$-grading on $F_{n}$ obtained in the following way. We let $\alpha = (d_{1}, d_{2}, \ldots, d_{n}) \in \Z^{n}$ and $F_{n}^{\alpha}$ be the span of all the monomials whose degree with respect to each generator $x_{i}$ is $d_{i}$, $i = 1, 2, \ldots, n$. Then the subspaces $F_{n}^{\alpha}$ form a $\Z^{n}$-grading on $F_{n}$ as $F_{n} = \displaystyle\oplus_{\alpha \in \Z^{n}} F_{n}^{\alpha}$ (see \cite{bahturin}).

Then the corresponding standard gradings on $L_{5}$ and $L_{6}$ are as follows. 
\begin{enumerate}
\item
If $L = L_{5}$ then $L = \oplus_{(a,b) \in\Z^{2}}L_{(a,b)}$ where $L_{(0,1)} = \langle e_{1}\rangle,L_{(0,1)} = \langle e_{2}\rangle, L_{(1,1)} = \langle e_{3}\rangle,\ L_{(2,1)} = \langle e_{4} \rangle,\ L_{(1,2)} = \langle e_{5} \rangle$ and all other summands are zero. 
\item If $L = L_{6}$, then $L = \oplus_{(a,b,c) \in Z^{3}}$ where $L_{(1,0,0)} = \langle e_{1}\rangle$, $L_{(0,1,0)} = \langle e_{2}\rangle,\ L_{(0,0,1)} = \langle e_{3}\rangle,\ L_{(1,0,1)} = \langle e_{4}\rangle,\ L_{(0,1,1)} = \langle e_{5}\rangle$, and $L_{(1,1,0)} = \langle e_{6} \rangle$ and all the other summands are zero. 
\end{enumerate}

Essentially, the above standard gradings are the only gradings on these Lie algebras in the sense that any grading of these algebras are induced from the standard gradings. More precisely, if $G$ and $H$ are abelian groups and $\alpha:G \To H$ is a group homomorphism, then a grading $\Gamma': V = \oplus_{h \in H} V'_{h}$ is said to be induced from the grading $\Gamma: V = \oplus_{g \in G}V_{g}$ if 

\[V'_{h\in H} = \bigoplus_{g\in G:\ \alpha(g) = h}V_{g}.\]
Then, it follows from \cite[Theorem 1]{bahturin} that any $G$-grading $\Gamma$ on $L_{n}$ is induced from the standard $\Z^{n}$-grading by a homomorphism $\alpha: \Z^{n} \To H$, $n = 2,3$.

For two gradings $\Gamma: \oplus_{g\in G}A_{g}$ and $\Gamma': \oplus_{h \in H}A'_{h}$ we say that $\Gamma'$ is a \textit{refinement} of $\Gamma$ (or that $\Gamma$ is a \textit{coarsening} of $\Gamma'$) if for each $h\in H$ there exists $g\in G$ such that $A'_{h} \subset A_{g}$. A grading $\Gamma$ which does not have a proper refinement is called \textit{fine}. It turns out that the standard grading is also the only fine abelian group grading of the Lie algebras $L_{5}$ and $L_{6}$, up to equivalence, see \cite[Corollary 3]{bahturin}. 

\subsection{Gradings on Standard Filiform Lie Algebras}

Gradings on standard filiform Lie algebras have been classified by Bahturin et al. in \cite{bahturin-et-al} and they are parallel to gradings on free nilpotent Lie algebras. Let $L = L_{n}$ denote the $n$-dimensional standard filiform Lie algebra. Then $L$ has the standard grading defined by $L = \oplus_{(a,b)\in \Z^{2}}L_{(a,b)}$ where $L_{(1,0)} = \langle e_{1} \rangle$, $L_{(s-2,1)} = \langle e_{s}\rangle$ for all $s = 2,3, \ldots, n$ and all other summands are zero. In this case, any $G$-grading of $L_{n}$ is isomorphic to a coarsening of the standard $\Z^{n}$-grading \cite[Theorem 12]{bahturin-et-al}.

\bigskip
\footnotesize
\textsc{Research Center for Theoretical Sciences, Mathematics Division, National Cheng Kung University, Tainan, Taiwan}.\par\nopagebreak
Currently at \textsc{the American University of the Middle East, Department of Mathematics and Statistics, Kuwait}
\end{document}